\newtheorem{prop}{Proposition}
\newtheorem{lem}[prop]{Lemma}
\newtheorem{cor}[prop]{Corollary}
\theoremstyle{definition}
\newtheorem*{rem}{Remark}
\newtheorem*{ack}{Acknowledgements}
\def\co{\colon\thinspace}
\newcommand{\calB}{\mathcal{B}}
\newcommand{\oB}{\overline{B}}
\newcommand{\C}{\mathbb{C}}
\newcommand{\D}{\mathbb{D}}
\newcommand{\rmd}{\mathrm{d}}
\newcommand{\rme}{\mathrm{e}}
\newcommand{\bbE}{\mathbb{E}}
\newcommand{\bbF}{\mathbb{F}}
\newcommand{\rmi}{\mathrm{i}}
\newcommand{\calM}{\mathcal{M}}
\newcommand{\N}{\mathbb{N}}
\newcommand{\opartial}{\overline{\partial}}
\newcommand{\R}{\mathbb{R}}
\newcommand{\oz}{\overline{z}}
\newcommand{\Ccinfty}{C_{\mathrm{c}}^{\infty}}
\newcommand{\Cckk}{C_{\mathrm{c}}^{k+1}}
\DeclareMathOperator{\coker}{coker}
\DeclareMathOperator{\image}{im}
\begin{document}

\author[H.~Geiges]{Hansj\"org Geiges}
\author[M.~Sa\u{g}lam]{Murat Sa\u{g}lam}
\address{Mathematisches Institut, Universit\"at zu K\"oln,
Weyertal 86--90, 50931 K\"oln, Germany}
\email{geiges@math.uni-koeln.de}
\email{msaglam@math.uni-koeln.de}
\author[K.~Zehmisch]{Kai Zehmisch}
\address{Fakult\"at f\"ur Mathematik, Ruhr-Universit\"at Bochum,
Universit\"atsstra{\ss}e 150, 44780 Bochum, Germany}
\email{kai.zehmisch@rub.de}

\title[Why bootstrapping fails in $C^k$]{Why bootstrapping for
$J$-holomorphic curves fails in $C^k$}

\date{}

\begin{abstract}
We present a simple example for the failure of the Calder\'on--Zygmund
estimate for the $\opartial$-operator when the Sobolev $(k,p)$-norms
are replaced by the $C^k$-norms. This example is discussed in the
context of elliptic bootstrapping, Fredholm theory, and the regularity of
$J$-holomorphic curves.
\end{abstract}

\subjclass[2020]{35B65; 35J46, 32Q65, 53D05}

\keywords{elliptic bootstrapping, Calder\'on--Zygmund estimate,
$\opartial$-operator, pseudoholomorphic curve}

\maketitle


\section{Introduction}
In the theory of elliptic partial differential equations,
with the Laplace equation as the prototype, it is well known
(see \cite{fero20,jost13}, for instance)
that regularity results can be established for solutions lying
in Sobolev or H\"older spaces, with the help of Calder\'on--Zygmund or
Schauder estimates, respectively.

In symplectic topology, moduli spaces of $J$-holomorphic
curves are described as solution sets of a nonlinear
Cauchy--Riemann equation. Since the implicit function theorem
fails in the Fr\'echet space of smooth maps, one needs to work
with a Banach space of maps having lower regularity.
A Calder\'on--Zygmund estimate for the $\opartial$-operator then is
essential for two purposes:
\begin{itemize}
\item[(i)] regularity results for $J$-holomorphic curves, in the sense
that solutions of the nonlinear Cauchy--Riemann equation
in the Sobolev space $W^{1,p}$ actually turn out to be of class~$C^{\infty}$;
\item[(ii)] existence of $C^k$-bounds for all~$k$ that guarantee compactness
of the relevant moduli space of $J$-holomorphic curves.
\end{itemize}
The Calder\'on--Zygmund estimate allows one to bootstrap from
$W^{k,p}$ to $W^{k+1,p}$; smoothness and $C^k$-bounds (for $p>2$)
then follow from the Sobolev embedding theorem and the corresponding
Sobolev inequality.

Roughly speaking, the estimates say that a (weak) solution $u\in W^{k,p}$
of the inhomogeneous Laplace equation $\Delta u=f$ is two derivatives
more regular than~$f$. The literature abounds with examples
that this statement fails in the smooth theory, that is, from
$\Delta u$, understood in the distributional sense, of class $C^k$
one cannot, in general, conclude that $u\in C^{k+2}$.
This means that a $C^k$-analogue of these estimates
cannot be formulated in a sensible way, because $\|u\|_{C^{k+2}}$ may not
be defined.

However, we have not seen it emphasised that even when the correct
order of differentiability is assumed \emph{a priori}, the Calder\'on--Zygmund
(or Schauder) estimates fail when the Sobolev (or H\"older) norms are
replaced by $C^k$-norms. We allow that this may be fairly apparent to the
more analytically inclined.

In this expository note we adapt an example of Sikorav~\cite{siko94}
(where the $C^k$-esti\-mate cannot be formulated)
to define an explicit family of solutions of the inhomogeneous
Cauchy--Riemann equation, having the correct regularity (in the $C^k$-theory),
but violating the Calder\'on--Zygmund estimate.
We also place this in the context of the Fredholm property of the
Cauchy--Riemann operator, which is essential for showing that the relevant
moduli space of $J$-holomorphic curves is a smooth manifold.

As we shall explain, in the theory of $J$-holomorphic curves one often
deals with these estimates in a setting where the maps are known to be
smooth. As a consequence, the consideration of Sobolev norms on such maps,
or the introduction of Sobolev spaces
of $J$-holomorphic curves, may seem to lack motivation. 
Our example clarifies why one has to work with Sobolev completions.
\section{The Calder\'on--Zygmund estimate}
In this section we formulate the Calder\'on--Zygmund estimate
for the inhomogeneous Cauchy--Riemann equation and briefly discuss
its relevance for the bootstrapping of $J$-holomorphic curves.

Let $B_R\subset\C$ be the open disc of radius~$R$ centred at~$0$.
We write $\Ccinfty(B_R,\C^n)$ for the space of compactly supported
smooth maps $B_R\rightarrow\C^n$, and
$W_0^{k+1,p}(B_R,\C^n)$ for its closure
in the Sobolev space of $k+1$ times weakly differentiable
maps of finite Sobolev $(k+1,p)$-norm. The Cauchy--Riemann operator
is
\[ \opartial:=\partial_{\oz}:=
\frac{1}{2}\bigl(\partial_x+\rmi\partial_y\bigr). \]
Likewise, we are going to set $\partial:=\frac{1}{2}
(\partial_x-\rmi\partial_y)$.
For a proof of the following estimate see~\cite{geze23}.

\begin{prop}
\label{prop:apriori}
For any $k\in\N_0$ and real numbers $p>1$ and $R>0$, there is
a positive constant $c=c(k,p,R)$ such that
\[ \|u\|_{k+1,p}\leq c\|\opartial u\|_{k,p}\;\;\;\text{for all}
\;\;\; u\in W_0^{k+1,p}(B_R,\C^n).\]
\end{prop}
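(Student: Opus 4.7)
The plan is to reduce the estimate to its $k=0$ case by commuting $\opartial$ with constant-coefficient derivatives, and to prove the base case by inverting $\opartial$ explicitly through the Cauchy--Pompeiu formula and then invoking the $L^p$-boundedness of the Ahlfors--Beurling transform. By density of $\Ccinfty(B_R,\C^n)$ in $W_0^{k+1,p}(B_R,\C^n)$ it is enough to argue for $u\in\Ccinfty(B_R,\C^n)$; and since both sides of the inequality decouple across components, I may assume $n=1$.

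For the base case $k=0$, I would start from the Cauchy--Pompeiu representation
\[ u(z)=-\frac{1}{\pi}\int_{\C}\frac{\opartial u(\zeta)}{z-\zeta}\,\rmd A(\zeta), \]
valid for any compactly supported smooth $u$. Differentiating in $z$ and interpreting the resulting integral in the principal-value sense gives $\partial u=T(\opartial u)$, where $T$ is the Beurling transform with kernel $-1/(\pi(z-\zeta)^2)$. This kernel is homogeneous of degree $-2$ with vanishing mean on circles, so classical Calder\'on--Zygmund theory yields boundedness of $T$ on $L^p(\C)$ for $1<p<\infty$, and in particular $\|\partial u\|_p\leq C_p\|\opartial u\|_p$. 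Combined with the pointwise Wirtinger identity $|\partial_x u|^2+|\partial_y u|^2=2(|\partial u|^2+|\opartial u|^2)$, this controls $\|\nabla u\|_p$ by $\|\opartial u\|_p$, and the Poincar\'e inequality on $B_R$ then upgrades the bound to $\|u\|_{1,p}\leq c(p,R)\|\opartial u\|_p$.

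For general $k$ no genuine induction is needed, since $\opartial$ commutes with every constant-coefficient derivative. For a multi-index $\alpha$ with $|\alpha|\leq k$ the function $\partial^\alpha u$ again lies in $\Ccinfty(B_R,\C)$, so the base-case estimate applied to it yields
\[ \|\partial^\alpha u\|_{1,p}\leq c\,\|\opartial\partial^\alpha u\|_p
   = c\,\|\partial^\alpha\opartial u\|_p\leq c\,\|\opartial u\|_{k,p}. \]
Summing over $|\alpha|\leq k$ produces $\|u\|_{k+1,p}\leq c'\|\opartial u\|_{k,p}$, and the density argument then extends the estimate to all of $W_0^{k+1,p}(B_R,\C^n)$.

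The main obstacle is the $L^p$-boundedness of the Beurling transform; this is the deep step of the argument and is the place where the restriction $1<p<\infty$ enters the picture. For $p=2$ one sees it immediately from Plancherel, as $T$ acts by the unimodular Fourier multiplier $\overline{\xi}/\xi$; for the full range $1<p<\infty$ one has to invoke the singular-integral machinery (weak-type $(1,1)$ together with Marcinkiewicz interpolation and duality, or an equivalent route).
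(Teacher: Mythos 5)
The paper gives no proof of Proposition~\ref{prop:apriori}; it defers to~\cite{geze23} and only records that one should first establish the inequality for $u\in\Ccinfty(B_R,\C^n)$ and then pass to $W_0^{k+1,p}$ by density, and that $p=2$ is considerably easier. Your argument is fully consistent with those hints (your Plancherel remark for $p=2$ is exactly the alluded-to simplification) and is, on its own terms, correct: it is the standard Calder\'on--Zygmund route via the Cauchy--Pompeiu representation and the $L^p$-boundedness of the Beurling transform, as in McDuff--Salamon~\cite{mcsa12}. The reduction to $n=1$, the commutation $\opartial\partial^\alpha=\partial^\alpha\opartial$ that cleanly dispatches $k\geq 1$ without any genuine induction, the Wirtinger identity $|\partial_x u|^2+|\partial_y u|^2=2\bigl(|\partial u|^2+|\opartial u|^2\bigr)$ passing from $\partial u$, $\opartial u$ to $\nabla u$, and the Poincar\'e inequality on $B_R$ supplying the zeroth-order term are all in order. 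Two cosmetic points. With $\opartial=\tfrac12(\partial_x+\rmi\partial_y)$ the Cauchy--Pompeiu formula for compactly supported smooth $u$ reads $u(z)=\frac{1}{\pi}\int_{\C}\frac{\opartial u(\zeta)}{z-\zeta}\,\rmd A(\zeta)$, so your displayed version carries a spurious minus sign --- harmless for a norm estimate, and in fact the Beurling kernel $-1/\bigl(\pi(z-\zeta)^2\bigr)$ that you wrote down is the one matching the corrected sign. And the identity $\partial u=T(\opartial u)$ is best isolated as a separate lemma, namely that the Cauchy transform $C$ of a compactly supported $L^p$ function satisfies $\opartial(Cg)=g$ and $\partial(Cg)=Tg$ in the distributional sense; your parenthetical appeal to the principal-value interpretation is adequate for a sketch but should be made explicit in a complete write-up.
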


Without the assumption on compact support, one needs to
add the term $c\|u\|_{k,p}$ on the right-hand side, as can be seen
by a partition of unity argument. This is the more common formulation
of the Calder\'on--Zygmund estimate (and sometimes referred to as
a semi-Fredholm estimate, cf.~\cite{geze23}).

One first has to prove the proposition for $u\in\Ccinfty(B_R,\C^n)$,
the stated version then follows by writing $u\in W_0^{k+1,p}(B_R,\C^n)$
as a limit of compactly supported smooth maps.

\begin{rem}
For $p=2$, the proof of Proposition~\ref{prop:apriori} simplifies considerably,
see \cite[Section 4.2]{abho19} or \cite[Section III.1.2]{geze23},
but for the subsequent application of the
Sobolev embedding theorem one needs $p>2$.
\end{rem}

We are going to show by an example that there is no such uniform estimate
$\|u\|_{C^{k+1}}\leq c\|\opartial u\|_{C^k}$.

\begin{prop}
\label{prop:sequence}
For any $k\in\N_0$ there is a sequence $(u_{\nu})$
in $\Cckk(B_{1/2},\C)$ with $\|\opartial u_{\nu}\|_{C^k}$ bounded
uniformly in~$\nu$,
but $\|u_{\nu}\|_{C^{k+1}}\rightarrow\infty$ as $\nu\rightarrow\infty$.
\end{prop}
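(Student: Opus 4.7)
\emph{Proof plan.} The strategy, matching the announced adaptation of Sikorav's example, is to exhibit an explicit smooth family approximating a model limit of class $C^k\setminus C^{k+1}$. A natural candidate is
\[ u_\nu(z) := \chi(z)\, z^{k+1}\log\bigl(|z|^2+\nu^{-2}\bigr), \]
with $\chi\in\Ccinfty(B_{1/2},\R)$ a smooth cutoff equal to $1$ on a neighbourhood of $0$; since the argument of the logarithm is smooth and strictly positive, $u_\nu\in\Ccinfty(B_{1/2},\C)\subset\Cckk(B_{1/2},\C)$ automatically.

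The divergence of $\|u_\nu\|_{C^{k+1}}$ will be extracted at the single point $z=0$. Writing $A_\nu:=z\oz+\nu^{-2}$ and observing that $\partial_z\log A_\nu=\oz/A_\nu$, every derivative $\partial_z^j\log A_\nu|_{z=0}$ with $j\ge 1$ vanishes because it still carries a factor $\oz^j$ in the numerator. Consequently the Leibniz expansion of $\partial_z^{k+1}(z^{k+1}\log A_\nu)$ at the origin collapses to its extremal term, giving
\[ \partial_z^{k+1}u_\nu(0) = (k+1)!\,\log(\nu^{-2}) = -2(k+1)!\,\log\nu, \]
so $\|u_\nu\|_{C^{k+1}}\ge|\partial_z^{k+1}u_\nu(0)|\to\infty$.

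For the uniform $C^k$-bound on $\opartial u_\nu$, a direct computation produces
\[ \opartial u_\nu = \chi\,\frac{z^{k+2}}{A_\nu} + (\opartial\chi)\,z^{k+1}\log A_\nu. \]
The second summand is supported where $|z|$ is bounded away from $0$, so $A_\nu$ stays between two positive constants independent of $\nu$, and all its derivatives are controlled uniformly. The substantive task is to bound $\partial_z^a\partial_{\oz}^b(z^{k+2}/A_\nu)$ uniformly in $\nu$ for every $a+b\le k$. I would handle this by a weight count: each monomial $z^{a_1}\oz^{b_1}/A_\nu^{c_1}$ carries the weight $w:=a_1+b_1-2c_1$, and a product-rule check shows that each application of $\partial_z$ or $\partial_{\oz}$ drops $w$ by exactly one. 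Since $z^{k+2}/A_\nu$ has initial weight $k$, after at most $k$ derivatives every surviving monomial still satisfies $w\ge 0$. A case distinction between $\{|z|\le\nu^{-1}\}$, where $A_\nu\sim\nu^{-2}$, and $\{|z|\ge\nu^{-1}\}$, where $A_\nu\sim|z|^2$, then yields $|z|^{a_1+b_1}/A_\nu^{c_1}\le 1$ in both regimes whenever $w\ge 0$.

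The main obstacle is precisely this last uniform estimate: the weight invariant is what turns an otherwise combinatorial Leibniz expansion into a one-line comparison. The remaining items — smoothness and compact support of $u_\nu$, the single-point calculation for the $C^{k+1}$ blow-up, and the routine bound on the $\opartial\chi$ contribution — are straightforward inspections.
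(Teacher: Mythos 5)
Your proposal is correct, and it takes a genuinely different route from the paper's. The paper starts from Sikorav's explicit $C^k\setminus C^{k+1}$ function $z^{k+1}\log\log|z|^{-2}$ and regularises it by the multiplicative factor $|z|^{1/\nu}$, giving $f_\nu(z)=z^{k+1}|z|^{1/\nu}\log\log|z|^{-2}$; the uniform $C^k$-bound on $\opartial f_\nu$ is then extracted from the elementary inequality $0<\frac{1}{2\nu}r\log\log r^{-2\nu}<-r\log r$ after the substitution $|z|=r^\nu$, and the $C^{k+1}$-blow-up comes from evaluating the iterated logarithm at $z=2^{-\nu}$. You instead smooth the singularity of $z^{k+1}\log|z|^2$ additively, putting $\nu^{-2}$ inside the logarithm, so that each $u_\nu$ is genuinely $C^\infty$; the $C^{k+1}$-blow-up is read off at the single point $z=0$, and the uniform $C^k$-control on $z^{k+2}/(|z|^2+\nu^{-2})$ comes from the observation that the weight $a_1+b_1-2c_1$ of a monomial $z^{a_1}\oz^{b_1}A_\nu^{-c_1}$ drops by exactly one under $\partial_z$ or $\partial_{\oz}$ and remains nonnegative after $\le k$ derivatives, which forces uniform boundedness in both regimes $|z|\lessgtr\nu^{-1}$. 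Your weight calculus is a clean, self-contained combinatorial device, and it has the aesthetic advantage of producing smooth counterexamples. The paper's version has the advantage of staying visibly anchored to Sikorav's limiting function $\psi f$: the same sequence $(f_\nu)$ is reused in Section~\ref{section:Fredholm} to show concretely that $\opartial(\psi f)$ lies in the closure of $\image\opartial$ but not in $\image\opartial$ itself, so the non-closedness of the image and the failure of the $C^k$-estimate are exhibited by one and the same construction. Both arguments are complete and correct.
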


From Proposition~\ref{prop:apriori}, in \cite{geze23} the regularity of
$J$-holomorphic discs (with Lagrangian boundary condition) is established
by a localisation argument and the difference quotient technique of Abbas and
Hofer~\cite{abho19}. That difference quotient technique allows one to
bootstrap from $W^{k,p}$ to $W^{k+1,p}$, but not from $C^k$ to
$C^{k+1}$, so a Calder\'on--Zygmund estimate in $C^k$ would not be
of help.

A different approach to the regularity of $J$-holomorphic curves
can be found in \cite[Section~B.4]{mcsa12}. Here the nonlinear
Cauchy--Riemann equation $u_x+J(u)u_y=0$ is reformulated as
an inhomogeneous linear equation, and then one directly uses the
regularity theory for the $\opartial$-operator. This approach would
stumble at the first hurdle in the $C^k$-theory by Sikorav's example.

Much of the compactness theory of $J$-holomorphic curves as in
\cite{geze23} would go through in the smooth theory if one had
a Calder\'on--Zygmund estimate (for smooth maps) in the $C^k$-norms.
Our example shows why this hope is in vain.
\section{The example}
\label{section:example}
\subsection{Sikorav's example}
\label{subsection:Sikorav}
We begin with an example of a function $f\co B_{1/2}\rightarrow\C$
that is not of class $C^1$, even though $\opartial f$ (in the
distributional sense) is of
class~$C^0$. This is a slight modification (and correction)
of an example presented by Sikorav~\cite{siko94}, which is closely
related to the standard example illustrating the corresponding
phenomenon for the Laplace operator, see~\cite{fero20}.
Set
\begin{equation}
\label{eqn:Sikorav}
f(z)=\begin{cases}
z\log\log|z|^{-2} & \text{for $z\in B_{1/2}\setminus\{0\}$},\\
0                 & \text{for $z=0$}.
\end{cases}
\end{equation}
Then
\[ \opartial f=-\frac{z}{\oz\log|z|^{-2}}\;\;\;\text{for $z\neq 0$}.\]
This extends continuously (with value~$0$) into~$z=0$, and this continuous
extension is the distributional derivative~$\opartial f$ on $B_{1/2}$
(see the discussion in Section~\ref{section:Fredholm}).

On the other hand, we have
\[ \partial f=\log\log|z|^{-2}-\frac{1}{\log|z|^{-2}}\;\;\;
\text{for $z\neq 0$},\]
which does not extend continuously into~$z=0$.
\subsection{Proof of Proposition~\ref{prop:sequence}}
\label{subsection:proof}
Here is the example for the failure of the Calder\'on--Zygmund estimate
in the $C^k$-theory.
Choose a smooth function $\psi\co\R_0^+\rightarrow[0,1]$ compactly supported
in $[0,1/4)$ and with $\psi\equiv 1$ on $[0,1/16]$. For $\nu\in\N$ we define
$f_{\nu}\co B_{1/2}\rightarrow\C$ by
\[ f_{\nu}(z)=\begin{cases}
z|z|^{1/\nu}\log\log|z|^{-2}
   & \text{for $z\in B_{1/2}\setminus\{0\}$},\\
0                            
   & \text{for $z=0$},
\end{cases}\]
and we set $u_{\nu}(z)=\psi(|z|^2)\cdot f_{\nu}(z)$.

Since $|z|^{1/\nu}\log\log|z|^{-2}\rightarrow 0$ as $z\rightarrow 0$,
the function $f_{\nu}$ is complex differentiable in $z=0$
with $\partial f_{\nu}(0)$ equal to~$0$, and hence
differentiable with $\opartial f_{\nu}(0)$ likewise equal to~$0$.

Writing $|z|^{1/\nu}$ as $(z\oz)^{1/2\nu}$ we see that
\[ \opartial |z|^{1/\nu}=\frac{1}{2\nu}|z|^{\frac{1}{\nu}-2}z
\;\;\;\text{for $z\neq 0$}.\]
We then compute
\begin{equation}
\label{eqn:opartialf}
\opartial f_{\nu}(z)=
\frac{1}{2\nu}|z|^{\frac{1}{\nu}-2}z^2\log\log|z|^{-2}
-\frac{z|z|^{1/\nu}}{\oz\log|z|^{-2}}\;\;\;
\text{for $z\neq 0$},
\end{equation}
and
\[ \partial f_{\nu}(z)=
|z|^{1/\nu}\log\log|z|^{-2}+\frac{\oz}{z}\,\opartial f_{\nu}(z)
\;\;\;\text{for $z\neq 0$}.\]
Since both $\opartial f_{\nu}(z)$ and $\partial f_{\nu}(z)$
go to $0$ as $z\rightarrow 0$, we conclude that $f_{\nu}$ (and hence
$u_{\nu}$) is of class~$C^1$.

The second summand in $\opartial f_{\nu}$
is bounded uniformly in~$\nu$ on $B_{1/2}\setminus\{0\}$.
Writing $|z|=r^{\nu}$ for $z\neq 0$ with $0<r<2^{-1/\nu}<1$,
we see that the first summand in $\opartial f_{\nu}$
is likewise bounded uniformly in~$\nu$, since
\begin{equation}
\label{eqn:r}
0<\frac{1}{2\nu}r\log\log r^{-2\nu}<
\frac{1}{2\nu}r\log r^{-2\nu}=-r\log r,
\end{equation}
which extends continuously into $r=0$.
Clearly, these bounds also take care of the second summand in
$\partial f_{\nu}$.

On the other hand, the first summand of $\partial f_{\nu}(z)$
evaluated at $z=2^{-\nu}$ yields
$\frac{1}{2}\log\log 2^{2\nu}$,
which goes to infinity as $\nu\rightarrow\infty$.
It follows that $\|\opartial f_{\nu}\|_{C^0}$ is bounded uniformly in~$\nu$,
whereas $\|f_{\nu}\|_{C^1}$ goes to infinity as $\nu\rightarrow\infty$.

The same is true for the compactly supported functions $u_{\nu}$, since
$\|f_{\nu}\|_{C^0}$ is bounded uniformly in $\nu$, and
\[ \opartial u_{\nu}(z)=
\psi'(|z|^2)z f_{\nu}(z)+\psi(|z|^2)\opartial f_{\nu}(z),\]
with a similar expression for $\partial u_{\nu}$, which means that
the limiting behaviour of $\opartial u_{\nu}$
and $\partial u_{\nu}$ equals that of $\opartial f_{\nu}$ and
$\partial f_{\nu}$, respectively.

In order to get examples for the higher $C^k$-norms, simply start from
the definition $f_{\nu}(z)=z^{k+1}|z|^{1/\nu}\log\log|z|^{-2}$
for $z\in B_{1/2}\setminus\{0\}$, for any $k\in\N_0$.
\section{The Fredholm property of $\opartial$}
\label{section:Fredholm}
The discussion so far shows that one cannot forgo Sobolev norms
for bootstrapping arguments, but it still seems to leave room
for the possibility to stay within the framework of
$C^k$-maps. In the compactness theory of $J$-holomorphic
curves one cannot simply work in a space of $C^k$-maps for
some fixed~$k$, since one typically relies on the Arzel\`a--Ascoli theorem
and $C^{k+1}$-bounds to guarantee convergence in~$C^k$.
However, one might want to start with $J$-holomorphic curves
of class~$C^1$, interpret them as maps of class $W^{1,p}$,
and then use elliptic bootstrapping (with respect to
Sobolev norms) and Sobolev embedding to show
that the curves are in fact smooth.
\subsection{Calder\'on--Zygmund estimate and Fredholm property}
As we want to explain now, it is not possible to avoid altogether the
use of Sobolev spaces of maps. Typically, for a given geometric
problem in symplectic topology, one describes a moduli space $\calM$
of $J$-holomorphic curves in the form
\[ \calM=\{u\in\calB\co \opartial_Ju=0\},\]
where $\calB$ is a Banach space of maps $u\co\Sigma\rightarrow (M,J)$
from a compact Riemann surface $\Sigma$ into an almost complex manifold
$(M,J)$ (with $J$ tamed by some symplectic form~$\omega$), subject to
(e.g.\ Lagrangian) boundary conditions when $\partial\Sigma\neq\emptyset$.
The nonlinear Cauchy--Riemann equation $\opartial_Ju:=u_x+J(u)u_y=0$
describes the $J$-holomorphicity of the map~$u$.

In order to establish that $\calM$ is a manifold of the expected dimension,
one needs to verify that $\opartial_J$ is a Fredholm operator, so that
one can apply the theorem of Sard--Smale. By a perturbation argument
it may suffice to do this for the linear Cauchy--Riemann operator
$\opartial$. This Fredholm property,
as we shall see presently, holds for Sobolev spaces,
but it is violated in the $C^k$-realm.

Consider a bounded linear operator $T\co\bbE\rightarrow\bbF$
between Banach spaces with $\dim\ker T<\infty$. Let
$\bbE_1$ be a closed complement of $\ker T$ in~$\bbE$.
It then follows from the open mapping theorem,
applied to $T|_{\bbE_1}\co\bbE_1\rightarrow T(\bbE)$, that
$T$ has a closed image if and only if we have an estimate $\|x\|_{\bbE}\leq 
c\|Tx\|_{\bbF}$ for all $x\in\bbE_1$. The image $T(\bbE)$ being closed is
a necessary condition for $\coker T$ to be finite, i.e.\
the Fredholm property of~$T$.

Thus, whether or not $\opartial$ has a closed image
(when regarded as an operator between certain Banach spaces of functions)
is equivalent to the existence or the failure
of the Calder\'on--Zygmund estimate in the corresponding norms, provided
$\ker\opartial$ is finite-dimensional in the given setting.

Here is an example how to use Proposition~\ref{prop:sequence}
to show the failure of the Fredholm property in the
$C^k$-theory of $J$-holomorphic discs.
Write $\D\subset\C$ for the closed unit disc, and $C^1_{\R}(\D,\C)$
for the space of $C^1$-maps $\D\rightarrow\C$ with real boundary values.
We may regard $C^1_{\mathrm{c}}(B_{1/2},\C)$ as
a subspace of $C^1_{\R}(\D,\C)$, and $C^0_{\mathrm{c}}(B_{1/2},\C)$
as a subspace of $C^0(\D,\C)$. Notice that the ambient spaces
are Banach spaces, but the subspaces are not closed.

\begin{cor}
\label{cor:not-F}
The operator $\opartial\co C^1_{\R}(\D,\C)\rightarrow
C^0(\D,\C)$ is not Fredholm.
\end{cor}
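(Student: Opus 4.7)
The strategy is forced by the equivalence recalled just above the corollary: for a bounded linear operator with finite-dimensional kernel, the Fredholm property requires the image to be closed, which in turn is equivalent (via the open mapping theorem on a closed complement of the kernel) to a two-sided Calder\'on--Zygmund type estimate. My plan is therefore to identify $\ker\opartial$, note that it is finite-dimensional, and then feed in the sequence from Proposition~\ref{prop:sequence} to violate the resulting estimate.

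For the kernel: any $u\in C^1_{\R}(\D,\C)$ with $\opartial u=0$ is holomorphic on the interior of $\D$, and its imaginary part is harmonic there with vanishing boundary values, hence is identically zero by the maximum principle. A holomorphic function with real values is constant, so $\ker\opartial=\R$ is one-dimensional. In particular it admits a continuous projection $P\co C^1_{\R}(\D,\C)\to\R$, and $\bbE_1:=\ker P$ is a closed complement.

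Assume for contradiction that $\opartial$ is Fredholm. Then $\image\opartial$ is closed, and the open mapping theorem applied to $\opartial|_{\bbE_1}\co\bbE_1\to\image\opartial$ yields a constant $c>0$ with $\|v\|_{C^1}\leq c\|\opartial v\|_{C^0}$ for every $v\in\bbE_1$. Let $u_{\nu}$ be the sequence of Proposition~\ref{prop:sequence} with $k=0$; since $u_{\nu}$ is supported in $B_{1/2}\subset\D$, it vanishes on $\partial\D$ and therefore belongs to $C^1_{\R}(\D,\C)$. Setting $v_{\nu}:=u_{\nu}-Pu_{\nu}\in\bbE_1$, we have $\opartial v_{\nu}=\opartial u_{\nu}$, so $\|v_{\nu}\|_{C^1}$ is uniformly bounded. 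The one step that is not automatic is the control of the kernel component: since $Pu_{\nu}$ is a real constant and $\|u_{\nu}\|_{C^0}$ is bounded (as noted in passing in the proof of Proposition~\ref{prop:sequence}), the estimate $|Pu_{\nu}|\leq\|u_{\nu}\|_{C^0}+\|v_{\nu}\|_{C^0}$ keeps $|Pu_{\nu}|$ bounded too. The triangle inequality then delivers a uniform bound on $\|u_{\nu}\|_{C^1}$, contradicting Proposition~\ref{prop:sequence}.
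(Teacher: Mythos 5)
Your proof is correct and follows the same overall strategy as the paper's: identify $\ker\opartial=\R$, then use the sequence from Proposition~\ref{prop:sequence} to violate the semi-Fredholm estimate that would follow from closedness of the image. Two points differ in detail. First, for the kernel you argue via the maximum principle applied to the imaginary part, whereas the paper uses Schwarz reflection across the unit circle followed by Liouville's theorem; both are standard and equally short. Second, and more substantively, you are more careful than the paper about the passage from the estimate on the closed complement $\bbE_1$ to the conclusion about $u_\nu$: the $u_\nu$ need not lie in $\bbE_1$, so you explicitly decompose $u_\nu=v_\nu+Pu_\nu$, control $|Pu_\nu|$ by the uniform $C^0$-bound on $u_\nu$ together with the bound on $\|v_\nu\|_{C^0}$, and then obtain the contradiction by the triangle inequality. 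The paper elides this step, implicitly absorbing it into the equivalence stated just before the corollary; your version makes that implicit argument explicit, which is a genuine (if small) improvement in rigour.
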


\begin{proof}
A function in $\ker\opartial$
can be extended by Schwarz reflection in the unit circle to a bounded
holomorphic function on~$\C$, which is constant by Liouville's theorem.
This implies $\ker\opartial=\R$. Moreover, by
Proposition~\ref{prop:sequence}, the operator $\opartial$
violates the Calder\'on--Zygmund estimate.
Thus, $\image\opartial$ is not closed.
\end{proof}
\subsection{Failure of the Fredholm property in $C^k$}
We now want to use Sikorav's example to demonstrate by a specific example
that the image of $\opartial\co C^1_{\R}(\D,\C)\rightarrow C^0(\D,\C)$
is not closed, and thus give a more concrete proof of
Corollary~\ref{cor:not-F}.

\begin{prop}
\label{prop:not-closed}
The weak derivative $\opartial(\psi f)\in C^0(\D,\C)$ --- with $f$ as
in~\eqref{eqn:Sikorav},
and $\psi$ the cut-off function from Section~\ref{subsection:proof} ---
is in the closure of $\image\opartial$, but not
itself an element of that image.
\end{prop}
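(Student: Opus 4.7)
The plan is to approximate $\opartial(\psi f)$ in the $C^0$-norm by the sequence $\opartial u_\nu$ from Section~\ref{subsection:proof}, and to rule out $\opartial(\psi f)$ from $\image\opartial$ by combining elliptic regularity with Schwarz reflection.

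For the approximation step, I would take $u_\nu:=\psi f_\nu$ with $k=0$, as in the proof of Proposition~\ref{prop:sequence}. Each $u_\nu$ is compactly supported in $B_{1/2}$ and of class $C^1$, so its extension by zero belongs to $C^1_{\R}(\D,\C)$. Comparing \eqref{eqn:opartialf} with the formula $\opartial f=-z/(\oz\log|z|^{-2})$ from Section~\ref{subsection:Sikorav}, the difference $\opartial f_\nu-\opartial f$ splits into the first summand of \eqref{eqn:opartialf} plus $-(|z|^{1/\nu}-1)\cdot z/(\oz\log|z|^{-2})$. For the second piece, uniform convergence to~$0$ follows by splitting $B_{1/2}$ into $\{|z|\leq\delta\}$, on which the factor $1/\log|z|^{-2}$ supplies the decay, and its complement, on which $|z|^{1/\nu}\to 1$ uniformly. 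For the first piece, the $1/\nu$ prefactor combined with a short maximisation over the restricted range $|z|\leq 1/2$ shows the supremum is of order $(\log\nu)/\nu$ and hence tends to zero. Multiplication by $\psi$ preserves all of this, yielding $\opartial u_\nu\to\opartial(\psi f)$ in $C^0(\D,\C)$.

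For the non-membership step, suppose $u\in C^1_{\R}(\D,\C)$ satisfies $\opartial u=\opartial(\psi f)$. Then $v:=u-\psi f$ is continuous on $\D$ and distributionally $\opartial$-closed on $\D^{\circ}$, so by hypoellipticity of $\opartial$ (Weyl's lemma) it is holomorphic there. Since $\psi\equiv 0$ in a neighbourhood of $\partial\D$, $v$ agrees with $u$ there; in particular $v$ is continuous on all of $\D$ and real on $\partial\D$. Schwarz reflection across the unit circle, followed by Liouville's theorem, then forces $v$ to be constant, so $u=\psi f+c$ for some $c\in\R$. But $\partial f$ fails to extend continuously to $z=0$ (Section~\ref{subsection:Sikorav}), so $\psi f$ is not $C^1$ at the origin, contradicting $u\in C^1_{\R}(\D,\C)$.

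The main obstacle is the uniform convergence of the first summand in the approximation step: the bound~\eqref{eqn:r} was invoked in Section~\ref{subsection:proof} only to establish uniform boundedness in~$\nu$, so here we need the slightly stronger statement that on the restricted range $|z|\leq 1/2$ its supremum actually tends to zero. A brief maximisation, using that the unconstrained critical point of the expression lies outside this range for large~$\nu$ so that the maximum is attained at the boundary $|z|=1/2$, takes care of it.
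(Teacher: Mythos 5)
Your overall strategy is sound, and it is genuinely different from the paper's second proof. The paper does not approximate $\opartial(\psi f)$ by $\opartial(\psi f_\nu)$ directly: it constructs an interpolated sequence $g_\nu=\phi_\nu(|z|^2)f(z)$ that agrees with $f$ for $|z|\geq 4^{-\nu}$ and with $4f_\nu$ near the origin, so that the difference $\opartial g_\nu-\opartial f$ is supported in the shrinking disc $|z|\leq 4^{-\nu}$. Your direct approach, showing $\opartial f_\nu\to\opartial f$ uniformly on all of $B_{1/2}$, is actually simpler in structure and does work; but the justification you give for the first summand contains a genuine error that must be fixed before the argument is complete.

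Concretely, consider $h_\nu(s):=\tfrac{1}{2\nu}s^{1/\nu}\log\log s^{-2}$ for $s=|z|\in(0,1/2]$; you claim its unconstrained critical point lies outside $(0,1/2]$ for large $\nu$, so the supremum is attained at $s=1/2$. This is false. Writing $u=-\log s$, the critical equation $h_\nu'=0$ becomes $u\log(2u)=\nu$, whose solution $u^*$ tends to $+\infty$ with $\nu$; thus the critical point $s^*=\rme^{-u^*}\to 0$ and always lies in $(0,1/2)$. The supremum is attained there, not at the boundary, and its value is $\rme^{-u^*/\nu}/(2u^*)\sim(\log\nu)/(2\nu)$, which matches the order of magnitude you state (so your two sentences are in fact inconsistent with each other: the boundary value at $s=1/2$ is only $O(1/\nu)$). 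The conclusion that the supremum tends to zero is true, but the reasoning needs to be the interior-critical-point calculation, not a boundary maximisation. It is precisely the paper's interpolation that confines the relevant region to $|z|\leq 4^{-\nu}$, where the critical point \emph{does} fall outside the range and the maximum \emph{is} at the boundary; you appear to have transplanted that feature to the unconfined setting, where it no longer holds. As a minor further remark, in the non-membership step the Schwarz reflection and Liouville argument are dispensable: once $\opartial(u-\psi f)=0$ distributionally, Weyl's lemma already gives that $u-\psi f$ is smooth on the interior, whence $\psi f$ would be $C^1$, a contradiction; the paper takes this shorter route, though your longer version is not wrong.
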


We first present a `classical' argument using mollification, and then
an alternative approach using the sequence $(f_{\nu})$ introduced
in Section~\ref{subsection:proof}.
\subsection{Proof by mollification}
We begin by analysing Sikorav's example a little more carefully.

\begin{lem}
The function $f$ defined in \eqref{eqn:Sikorav} is
an element of $W^{1,p}(B_{1/2},\C)$ for any $p\in[1,\infty)$,
and the weak derivatives $\opartial f$ and
$\partial f$ may be assumed to coincide with the actual
derivatives on $B_{1/2}\setminus\{0\}$.
\end{lem}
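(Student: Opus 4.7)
The plan is to verify the two assertions in sequence: first the $L^p$-integrability of $f$ and of its pointwise derivatives (as computed in Section~\ref{subsection:Sikorav}), then the identification of those pointwise derivatives with the distributional derivatives via an exhaustion-by-annuli argument that isolates the singularity at the origin.

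For the integrability, I would observe that the pointwise $\opartial f=-z/(\oz\log|z|^{-2})$ extends continuously by zero at~$0$ and is therefore bounded on $B_{1/2}$, whereas $\partial f=\log\log|z|^{-2}-1/\log|z|^{-2}$ and $f$ itself satisfy $|f|\lesssim|z|\,\log\log|z|^{-2}$ and $|\partial f|\lesssim 1+\log\log|z|^{-2}$ near the origin. Since $\int_0^{1/2}(\log\log r^{-2})^p\,r\,\rmd r<\infty$ for every $p\in[1,\infty)$, all three functions lie in $L^p(B_{1/2},\C)$; the singleton $\{0\}$ is a null set, so the pointwise derivatives may be assigned any value there without affecting the $L^p$-equivalence class.

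For the identification of weak with classical derivatives, I would fix $\varphi\in\Ccinfty(B_{1/2},\C)$ and use dominated convergence to write
\[
\int_{B_{1/2}} f\,\opartial\varphi\,\rmd x\,\rmd y=\lim_{\epsilon\to 0}\int_{B_{1/2}\setminus B_\epsilon} f\,\opartial\varphi\,\rmd x\,\rmd y.
\]
On the annulus, where $f$ is smooth, Stokes' theorem applied to the $(1,0)$-form $f\varphi\,\rmd z$ yields
\[
\int_{B_{1/2}\setminus B_\epsilon}\bigl(f\,\opartial\varphi+\varphi\,\opartial f\bigr)\rmd x\,\rmd y=\frac{1}{2\rmi}\oint_{\partial(B_{1/2}\setminus B_\epsilon)} f\varphi\,\rmd z.
\]
The contribution from $\partial B_{1/2}$ vanishes because $\varphi$ has compact support, and the integral over $\partial B_\epsilon$ is bounded in modulus by $2\pi\|\varphi\|_\infty\cdot\epsilon\log\log\epsilon^{-2}\cdot\epsilon$, which tends to $0$. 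Passing to the limit therefore identifies the pointwise $\opartial f$ with the weak $\opartial$-derivative of~$f$; replacing $\rmd z$ by $\rmd\oz$ in the boundary form gives the analogous statement for $\partial f$.

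The main obstacle to watch is the vanishing of the circle integral on $\partial B_\epsilon$: the $\log\log$ factor grows as $\epsilon\to 0$, so one needs both the explicit $|z|$-factor built into the definition of~$f$ and the factor of $\epsilon$ coming from the length of $\partial B_\epsilon$ to force convergence. Once that is in hand, the three functions $f,\opartial f,\partial f$ lie in $L^p(B_{1/2},\C)$ for every finite $p$, and we conclude $f\in W^{1,p}(B_{1/2},\C)$ with the classical derivatives serving as representatives of the weak ones on $B_{1/2}\setminus\{0\}$.
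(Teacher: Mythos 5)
Your proposal is correct, but it takes a genuinely different route from the paper. The paper establishes the identification of the weak derivative by multiplying $f$ with a cut-off $\chi_\varepsilon$ supported away from $B_\varepsilon$, integrating by parts against the test function, and then letting $\varepsilon\searrow 0$ by dominated convergence; this requires the extra observation that the product $(\partial\chi_\varepsilon)f$ is dominated by the integrable function $c|f/z|$, so the paper must separately check that $f/z\in L^p$. You instead apply Stokes' theorem to the $(1,0)$-form $f\varphi\,\rmd z$ on the annulus $B_{1/2}\setminus B_\epsilon$ and estimate the inner boundary circle integral directly: the bound $2\pi\|\varphi\|_\infty\cdot\epsilon\cdot\epsilon\log\log\epsilon^{-2}\to 0$ disposes of the singularity at once, and no domination of a cut-off derivative is needed. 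Both arguments are standard removable-singularity techniques, but yours is slightly leaner in that the boundary term vanishes by an explicit one-dimensional estimate rather than by invoking Lebesgue's theorem on a product with a singular cut-off derivative. For the $L^p$-membership, the paper reduces to $\int_0^1(\log r^{-1})^p\,\rmd r=\Gamma(p+1)$ via the bound $\log\log r^{-2}<2\log r^{-1}$; you state the equivalent polar-coordinate integral $\int_0^{1/2}(\log\log r^{-2})^p\,r\,\rmd r<\infty$ without elaboration, which is correct but could benefit from the same one-line reduction, since the convergence is not completely immediate to every reader.
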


\begin{proof}
(i) First we are going to show that the weak $\partial$- and
$\opartial$-derivatives of $f$ are as claimed.
We consider $\partial f$; for $\opartial f$ the argument is
completely analogous. For $\varepsilon>0$, let $\chi_{\varepsilon}\in
C^{\infty}(\C)$ be a cut-off function
with $\chi_{\varepsilon}\equiv 0$ on $B_{\varepsilon}$, and
$\chi_{\varepsilon}\equiv 1$ outside $B_{2\varepsilon}$.
Set $f_{\varepsilon}=\chi_{\varepsilon}f\in C^{\infty}(B_{1/2})$.
For any test function $\varphi\in C^{\infty}_{\mathrm{c}}(B_{1/2})$,
integration by parts gives
\begin{equation}
\label{eqn:int-parts}
\int_{B_{1/2}}(\partial f_{\varepsilon})\varphi=
-\int_{B_{1/2}}f_{\varepsilon}(\partial\varphi).
\end{equation}

We may assume that
\[ |\partial\chi_{\varepsilon}|\leq\frac{c}{2\varepsilon}\]
for some constant~$c$.
For $|z|\leq\varepsilon$ and $|z|\geq 2\varepsilon$, the derivative
$\partial\chi_{\varepsilon}$ vanishes identically. It follows that
\[ |\partial\chi_{\varepsilon}(z)|\leq\frac{c}{|z|}\;\;\;
\text{for $z\in B_{1/2}\setminus\{0\}$.}\]
From
\[ \partial f_{\varepsilon}=\chi_{\varepsilon}(\partial f)+
(\partial\chi_{\varepsilon})f\]
we then conclude that
\[ |\partial f_{\varepsilon}(z)|\leq|\partial f(z)|
+c\left|\frac{f(z)}{z}\right|\;\;\;\text{for $z\in B_{1/2}\setminus\{0\}$.}\]
Now, $\partial f_{\varepsilon}$
converges pointwise on $B_{1/2}\setminus\{0\}$ to $\partial f$.
Hence, provided the functions $f/z$ and $\partial f$ are
integrable, we can take the limit $\varepsilon\searrow 0$
in \eqref{eqn:int-parts} and conclude with the
Lebesgue dominated convergence theorem that
\[ \int_{B_{1/2}}(\partial f)\varphi = -\int_{B_{1/2}}f(\partial\varphi),\]
so $\partial f$ constitutes the weak $\partial$-derivative of~$f$.

(ii) It remains to show that the functions $f/z$, $\partial f$ and
$\opartial f$ are in $L^p(B_{1/2},\C)$ for any $p\in [1,\infty)$.
Both $\opartial f$ and the function $z\mapsto 1/\log|z|^{-2}$
extend continuously to $\oB_{1/2}$, so we need only show that
\[ z\longmapsto \log\log|z|^{-2},\;\;\; z\neq 0,\]
is in $L^p(B_{1/2})$.

For $r\in (0,1/2)$ we have
\[ 0<\log\log r^{-2}<\log r^{-2}=2\log r^{-1},\]
so it suffices to show that $r\mapsto\log r^{-1}$
is an $L^p$-function on the interval $(0,1/2)$.
In fact, this function is even $L^p$-integrable on
$(0,1)$, as can be seen by the substitution
$t=\log r^{-1}$, $t\in(0,\infty)$, which yields a
transformation to the $\Gamma$-function. For with
$r=\rme^{-t}$ and $\rmd r=-\rme^{-t}\,\rmd t$ we have
\[ \int_0^1\bigl(\log r^{-1}\bigr)^p\,\rmd r=
\int_0^{\infty}t^p\rme^{-t}\,\rmd t=\Gamma(p+1).\]
We conclude that the function $z\mapsto\log\log|z|^{-2}$ is in
$L^p(B_{1/2},\C)$.
\end{proof}

For the basic theory of mollifiers we use presently
in the first proof of
Proposition~\ref{prop:not-closed}, see \cite[Sections C.5 and 5.3]{evan10}.

\begin{proof}[First proof of Proposition~\ref{prop:not-closed}]
Let $\rho\in C^{\infty}(\C)$ be the standard mollifier,
\[ \rho(z)=\begin{cases}
C\cdot\exp\Bigl(\frac{1}{|z|^2-1}\Bigr) & \text{for $|z|<1$},\\
0                                      & \text{for $|z|\geq 1$},
\end{cases}\]
with $C\in\R^+$ chosen such that $\int_{\R^2}\rho=1$.
Set $\rho_{\varepsilon}(z)=\frac{1}{\varepsilon^2}\rho
\bigl(\frac{z}{\varepsilon}\bigr)$.

The function $f$ can be defined on $B_{0.6}$, so for $\varepsilon<0.1$
we can define the mollification
$f^{\varepsilon}=\rho_{\varepsilon}*f\in C^{\infty}(\oB_{1/2})$ of~$f$,
that is,
\[ f^{\varepsilon}(z):=\int_{B_{0.6}}\rho_{\varepsilon}(z-w)
f(w)\,\rmd\lambda^2_w=\int_{B_{\varepsilon}}\rho_{\varepsilon}(w)
f(z-w)\,\rmd\lambda^2_w\;\;\;\text{for $z\in\oB_{1/2}$},\]
where $\rmd\lambda^2_w$ denotes the $2$-dimensional Lebesgue measure
with respect to the variable~$w$.
As $\varepsilon\searrow 0$, the function $f^{\varepsilon}$ converges
to $f$ in $W^{1,p}(B_{1/2},\C)$, and for $p>2$ this convergence
is uniform in $C^0(\oB_{1/2},\C)$ by the Sobolev embedding theorem.

Since $\opartial f$ is continuous on $\oB_{1/2}$, we have
\[ \opartial f^{\varepsilon}=\opartial(\rho_{\varepsilon}* f)
=\rho_{\varepsilon}*\opartial f\longrightarrow\opartial f\]
uniformly on $\oB_{1/2}$ as $\varepsilon\searrow 0$.

Now let $\psi$ be the cut-off function defined in
Section~\ref{subsection:proof}, and set
\[ u^{\varepsilon}(z)=\psi(|z|^2)f^{\varepsilon}(z).\]
This function is compactly supported in $B_{1/2}$, and we may regard it
as an element of $C^1_{\R}(\D,\C)$. Then
\[ \opartial u^{\varepsilon}(z)=\psi'(|z|^2)zf^{\varepsilon}(z)+
\psi(|z|^2)\opartial f^{\varepsilon}(z)
\longrightarrow
\psi'(|z|^2)zf(z)+\psi(|z|^2)\opartial f(z)\]
uniformly on $\D$, and this limit equals
the weak $\opartial$-derivative of $\psi f$.

But $\opartial(\psi f)$ --- regarded as an element of
$C^0(\D,\C)$ --- does not equal $\opartial h$
for any $h\in C^1_{\R}(\D,\C)$, for otherwise
we would have $\opartial (\psi f-h)=0$ (in the
distributional sense), which by the regularity of
the $\opartial$-operator would entail that
$\psi f-h$ is of class $C^{\infty}$,
contradicting the fact that $\psi f$ is not
of class~$C^1$. Indeed, any weak solution $u\in L^1_{\mathrm{loc}}$
of the equation $\opartial u=0$ is also a weak solution of
the Laplace equation $\Delta u=0$, and hence harmonic
(and, in particular, smooth) by Weyl's lemma.
\end{proof}
\subsection{Proof using the sequence $(f_{\nu})$}
We now give a more explicit construction of a sequence
in $\image\opartial\subset C^0(\D,\C)$
with limit not contained in that image, based directly on the
sequence $(f_{\nu})$ presented in Section~\ref{subsection:proof}.

The idea is to interpolate between $f_{\nu}$
(or rather $4f_{\nu}$) near $z=0$ and
$f$ outside a neighbourhood of $z=0$ that is shrinking as
$\nu\rightarrow\infty$. To do so, consider the continuous function
\begin{equation}
\label{eqn:cont-fn}
t\longmapsto\begin{cases}
4t^{\frac{1}{2\nu}} & \text{for $0\leq t\leq 16^{-\nu}$},\\
1                   & \text{for $16^{-\nu}\leq t<1/2$},
\end{cases}
\end{equation}
which is smooth away from $t=0$ and $t=16^{-\nu}$.
Let $\phi_{\nu}\co
[0,1/2)\rightarrow [0,1]$ be a smoothening of this function at
$t=16^{-\nu}$. Specifically, $\phi_{\nu}$ is a
function that coincides with \eqref{eqn:cont-fn}
outside the interval $\bigl[\frac{1}{16^{\nu}+1},
\frac{1}{16^{\nu}}\bigr]$, and such that $\phi_{\nu}'$ is pointwise at most
double the slope of~\eqref{eqn:cont-fn}, that is,
\begin{equation}
\label{eqn:phi-nu-prime}
0\leq\phi_{\nu}'<\frac{4}{\nu}t^{\frac{1}{2\nu}-1}\;\;\;
\text{for $t\in(0,16^{-\nu})$},
\end{equation}
and $\phi_{\nu}'(t)=0$ for $t\geq 16^{-\nu}$.

Now set $g_{\nu}(z)=\phi_{\nu}(|z|^2)f(z)$. This function coincides
with $f$ for $|z|\geq 4^{-\nu}$, and with $4f_{\nu}$ near $z=0$.
In particular, it is of class~$C^1$.
We have
\[ \opartial g_{\nu}(z)=\phi_{\nu}'(|z|^2)zf(z)+\phi_{\nu}(|z|^2)\opartial f(z)
\;\;\;\text{for $z\in B_{1/2}\setminus\{0\}$},\]
and $\opartial g_{\nu}(0)=4\opartial f_{\nu}(0)=0=\opartial f(0)$.

We claim that $\opartial g_{\nu}\rightarrow\opartial f$ uniformly on~$B_{1/2}$.
To this end we estimate
\begin{equation}
\label{eqn:triangle}
|\opartial g_{\nu}(z)-\opartial f(z)|
\leq |\phi_{\nu}'(|z|^2)zf(z)|+|(\phi_{\nu}(|z|^2)-1)\opartial f(z)|.
\end{equation}
Thanks to \eqref{eqn:phi-nu-prime}, the first summand
is at most equal to $8$ times the norm of the first summand
in~\eqref{eqn:opartialf}. With \eqref{eqn:r} and the observation
that the function $r\mapsto -r\log r$ is monotone increasing
on the interval $(0,\rme^{-1})$, we see that the first summand on
the right-hand side of \eqref{eqn:triangle} is bounded uniformly by
$-8\cdot 16^{-\nu}\log 16^{-\nu}$,
which goes to $0$ as $\nu\rightarrow\infty$.

Regarding the second summand in \eqref{eqn:triangle}, notice that
$|\phi_{\nu}(|z|^2)-1|$ is bounded by~$1$, and it is identically equal to
$0$ for $z\geq 16^{-\nu}$. It follows that
\[ |(\phi_{\nu}(|z|^2)-1)\opartial f(z)|\leq
\max_{|\zeta|\leq 16^{-\nu}}|\opartial f(\zeta)|\;\;\;
\text{for $z\in B_{1/2}$}.\]
Since $\opartial f$ is continuous with $\opartial f(0)=0$, this
bound on the right-hand side
goes to $0$ as $\nu\rightarrow\infty$. This proves the claim.

Similarly, $\opartial(\psi  g_{\nu})\rightarrow\opartial
(\psi f)$ uniformly in $C^0(\D,\C)$.
This second proof of Proposition~\ref{prop:not-closed}
now concludes just like the first one.
\begin{ack}
We thank Alberto Abbondandolo, Peter Albers and Guido Sweers for useful
conversations. The authors are partially supported by the
Sonderforschungsbereich TRR 191
\textit{Symplectic Structures in Geometry, Algebra and Dynamics},
funded by the DFG (Projektnummer 281071066 -- TRR 191).
\end{ack}

\end{document}